\newcommand{\inte}{\mathrm{int}}
\renewcommand{\phi}{\varphi}
\newcommand{\E}{\mathbb{E}}
\newcommand{\R}{\mathbb{R}}
\newcommand{\cK}{\mathcal{K}}
\newcommand{\cN}{\mathcal{N}}
\newcommand{\cE}{\mathcal{E}}
\def\ds1{\mathds{1}}
\renewcommand{\epsilon}{\varepsilon}
\newcommand{\eps}{\epsilon}
\newcommand{\argmax}{\mathop{\mathrm{argmax}}}
\newcommand{\argmin}{\mathop{\mathrm{argmin}}}
\newlength{\minipagewidth}
\newcommand{\beq}{\begin{equation}}
\newcommand{\eeq}{\end{equation}}
\newcommand{\beqa}{\begin{eqnarray}}
\newcommand{\eeqa}{\end{eqnarray}}
\newcommand{\beqan}{\begin{eqnarray*}}
\newcommand{\eeqan}{\end{eqnarray*}}
\def\ba#1\ea{\begin{align*}#1\end{align*}} 
\def\banum#1\eanum{\begin{align}#1\end{align}} 
\def\eps{\varepsilon}
\newtheorem{theorem}{Theorem}
\newtheorem{lemma}{Lemma}
\newtheorem{definition}{Definition}
\newcommand{\BlackBox}{\rule{1.5ex}{1.5ex}}  
\newenvironment{proof}{\par\noindent{\bf Proof\ }}{\hfill\BlackBox\\[2mm]}
\begin{document}

\title{The entropic barrier: a simple and optimal universal self-concordant barrier}

\author{S\'ebastien Bubeck
	\thanks{Microsoft Research and Princeton University; \texttt{sebubeck@microsoft.com}.}
	\and
	Ronen Eldan 
	\thanks{Weizmann Institute of Science; \texttt{roneneldan@gmail.com}.}}
\date{\today}

\maketitle

\begin{abstract}
We prove that the Cram\'er transform of the uniform measure on a convex body in $\R^n$ is a $(1+o(1)) n$-self-concordant barrier, improving a seminal result of Nesterov and Nemirovski. This gives the first explicit construction of a universal barrier for convex bodies with optimal self-concordance parameter. The proof is based on basic geometry of log-concave distributions, and elementary duality in exponential families.
\end{abstract}

\section{Introduction}
Let $\cK \subset \R^n$ be a convex body, namely a compact convex set with a non-empty interior. Our main result is:
\begin{theorem} \label{th:main}
Let $f: \R^n \to \R$ be defined for $\theta \in \R^n$ by
\begin{equation} \label{eq:deff}
f(\theta) = \log \left(\int_{x \in \cK} \exp(\langle \theta, x \rangle) dx \right) .
\end{equation}
Then the Fenchel dual $f^* : \mathrm{int}(\cK) \to \R$, defined for $x \in \inte(\cK)$ by $f^*(x) = \sup_{\theta \in \R^n} \langle \theta, x \rangle - f(\theta)$, is a $(1+\epsilon_n) n$-self-concordant barrier on $\cK$, with $\epsilon_n \leq 100 \sqrt{\log(n)/n}$, for any $n \geq 80$. 
\end{theorem}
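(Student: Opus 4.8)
The whole argument rests on the exponential family $p_\theta(x)=\exp(\langle\theta,x\rangle-f(\theta))$ on $\cK$, for which each $p_\theta$ is log-concave, $\nabla f(\theta)=\E_{p_\theta}X$, $\nabla^2 f(\theta)=\mathrm{Cov}_{p_\theta}(X)\succ 0$ (strict since $\cK$ has non-empty interior), and $\theta\mapsto\nabla f(\theta)$ is a diffeomorphism of $\R^n$ onto $\inte(\cK)$. By Legendre duality $\nabla f^*$ is its inverse, $\nabla^2 f^*(x)=[\nabla^2 f(\theta)]^{-1}$ at $\theta:=\nabla f^*(x)$, and $f^*(x)=\langle\theta,x\rangle-f(\theta)=\E_{p_\theta}[\log p_\theta]$, i.e.\ $f^*(x)$ is the negative differential entropy of $p_\theta$. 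I would verify the three requirements of a $(1+\epsilon_n)n$-self-concordant barrier (barrier property, self-concordance, parameter bound) by translating each into a statement about log-concave measures.

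Barrier property and self-concordance are the soft half. If $x\to\bar x\in\partial\cK$, take a unit vector $v$ with $\langle v,\cdot\rangle\le c$ on $\cK$ and $\langle v,\bar x\rangle=c$; then $W:=c-\langle v,X\rangle\ge 0$ is one-dimensional log-concave with mean $\delta:=c-\langle v,x\rangle\to 0$, so its entropy is at most $1+\log\delta$, while the conditional entropy of $X$ given $\langle v,X\rangle$ is at most the log of the largest $(n-1)$-dimensional slice volume, a constant; hence $f^*(x)=-h(p_\theta)\ge-\log\delta-O(1)\to+\infty$. For self-concordance, differentiating $\nabla^2 f^*(x)=[\nabla^2 f(\nabla f^*(x))]^{-1}$ gives, with $u:=\nabla^2 f^*(x)\,h$,
\[
D^2 f^*(x)[h,h]=D^2 f(\theta)[u,u],\qquad D^3 f^*(x)[h,h,h]=-\,D^3 f(\theta)[u,u,u],
\]
so $f^*$ is self-concordant if and only if $f$ is, i.e.\ if and only if $|D^3 f(\theta)[u,u,u]|\le 2(D^2 f(\theta)[u,u])^{3/2}$ for all $\theta,u$. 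But these are the third and second cumulants of the one-dimensional log-concave random variable $Y:=\langle u,X\rangle$ under $p_\theta$, so the claim reduces to the sharp inequality $|\E(Y-\E Y)^3|\le 2(\E(Y-\E Y)^2)^{3/2}$ for every one-dimensional log-concave $Y$. This last fact I would obtain by the usual reduction of such extremal moment problems to densities whose logarithm is affine with at most one breakpoint, where a direct computation yields the constant $2$ (attained in the limit by the exponential distribution).

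The parameter bound is the crux. Since $\nabla f^*(x)=\theta$ and $\nabla^2 f^*(x)=[\nabla^2 f(\theta)]^{-1}$, the defining quantity equals
\[
\big\langle \nabla f^*(x),\,[\nabla^2 f^*(x)]^{-1}\nabla f^*(x)\big\rangle=\langle\theta,\nabla^2 f(\theta)\,\theta\rangle=\mathrm{Var}_{p_\theta}\!\big(\langle\theta,X\rangle\big),
\]
so the theorem amounts to $\sup_\theta \mathrm{Var}_{p_\theta}(\langle\theta,X\rangle)\le(1+\epsilon_n)n$. I would slice along $\theta$: the law of $Y:=\langle\theta,X\rangle$ under $p_\theta$ has density proportional to $e^{y}w(y)$ on a compact interval, where $w(y)=\vol_{n-1}(\cK\cap\{\langle\theta,\cdot\rangle=y\})$, and by Brunn--Minkowski $w^{1/(n-1)}$ is concave; writing $w=\phi^{\,n-1}$ this leaves the purely one-dimensional problem of bounding the variance of a density $q(y)\propto e^{y}\phi(y)^{n-1}$ with $\phi\ge 0$ concave. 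The extremal case is $\phi$ affine, for which $q$ is an affine image of the $\mathrm{Gamma}(n,1)$ law and the variance is exactly $n$. For general concave $\phi$ one uses $-(\log q)''=(n-1)\big((\phi'/\phi)^2-\phi''/\phi\big)\ge(n-1)(\phi'/\phi)^2$, which at the mode is $\ge 1/(n-1)$, and splits the line into a window of width $\Theta(\sqrt{n\log n})$ about the mode — on which this curvature lower bound forces a variance contribution $\le n(1+o(1))$ — and two log-concave tails beyond it, to which the standard exponential tail estimate for log-concave densities applies and which together contribute only $O(\sqrt{n\log n})$; tracking the constants yields $\epsilon_n\le 100\sqrt{\log n/n}$ for $n\ge 80$.

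The main obstacle is precisely this last step: the universal barrier of Nesterov--Nemirovski already yields $\mathrm{Var}_{p_\theta}(\langle\theta,X\rangle)\le Cn$ for some large absolute constant, but bringing the leading constant down to $1+o(1)$ demands the delicate one-dimensional analysis of the densities $e^{y}\phi(y)^{n-1}$ and a careful accounting of every error term against the target threshold; the other two requirements are, by contrast, an exact identity plus a one-line moment inequality for log-concave laws.
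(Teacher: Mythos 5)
Your proposal is correct and follows essentially the same path as the paper: self-concordance of $f^*$ reduces via Legendre duality to the sharp third-moment inequality $|\E(Y-\E Y)^3|\le 2\big(\E(Y-\E Y)^2\big)^{3/2}$ for one-dimensional log-concave laws (the paper's Lemma 2), and the parameter bound reduces to $\mathrm{Var}_{p_\theta}(\langle\theta,X\rangle)\le(1+\epsilon_n)n$, which the paper, like you, handles by slicing along $\theta$, invoking Brunn--Minkowski concavity of the slice volume, and combining a Gronwall-type curvature analysis near the mode (Lemma 3) with log-concave tail estimates (Lemma 4). The only cosmetic differences are that you prove the barrier property via an entropy bound whereas the paper just notes $\nabla f(\R^n)=\inte(\cK)$, that you exploit the slightly sharper $(n-1)$-concavity of the slice volume where the paper settles for $n$-concavity, and that your identification of the extremal one-dimensional density as an affine image of $\mathrm{Gamma}(n,1)$ (variance exactly $n$) is a clean way to see why the parameter is tight, a fact implicit in the paper's Gronwall computation.
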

In Section \ref{sec:relatedwork} we recall the definition of a $\nu$-self-concordant barrier and its importance in mathematical optimization. We give another point of view on $f^*$ in Section \ref{sec:expfamily}, where we show that it corresponds to the negative entropy of a specific element in a canonical exponential family for $\cK$. For this reason we refer to $f^*$ as the {\em entropic barrier} for $\cK$. Finally, we prove Theorem \ref{th:main} in Section \ref{sec:proof}. Technical lemmas on log-concave distributions are gathered in Section \ref{sec:tech}, where in particular we derive the sharp bound $\E X^3 \leq 2$ for a real isotropic log-concave random variable $X$.

\section{Context and related work} \label{sec:relatedwork}
For a $C^3$-smooth function $g:\R^n \to \R$, denote by $\nabla^2 g [\cdot, \cdot]$ its Hessian which we understand as a bilinear form over $\R^n$. Likewise, by $\nabla^3 g [\cdot, \cdot, \cdot]$ we denote its third derivative tensor. We first recall the definition, introduced in \cite{NN94}, of a self-concordant barrier. 

\begin{definition}
A function $g: \inte(\cK) \rightarrow \R$ is a barrier for $\cK$ if
$$g(x) \xrightarrow[x \to \partial \cK]{} +\infty .$$
A $C^3$-smooth convex function $g : \inte(\cK) \rightarrow \R$ is self-concordant if for all $x \in \inte(\cK), h \in \R^n$,
\begin{equation} \label{eq:sc}
\nabla^3 g(x) [h,h,h] \leq 2 (\nabla^2 g(x)[h,h])^{3/2} .
\end{equation}
Furthermore it is $\nu$-self-concordant if in addition for all $x \in \inte(\cK), h \in \R^n$,
\begin{equation} \label{eq:nusc}
\nabla g(x)[h] \leq \sqrt{\nu \cdot \nabla^2 g(x)[h,h] } .
\end{equation}
\end{definition}
Self-concordant barriers are central objects in the theory of Interior Point Methods (IPMs). The latter class of algorithms has revolutionized mathematical optimization, starting with \cite{Kar84}. Roughly speaking, an IPM minimizes the linear function $x \in \cK \mapsto \langle c, x \rangle$ (for some given $c \in \R^n$) by tracing the {\em central path} $(x(t))_{t \in (0, +\infty)}$ of a self-concordant barrier $g$ for $\cK$, where $x(t) \in \argmin_x \langle c, x \rangle + \frac{1}{t} g(x)$. The key property of $\nu$-self-concordant barriers is that a step of Newton's method on the function $x\mapsto \langle c, x \rangle + \frac{1}{t} g(x)$ allows to move from $x((1-1/\sqrt{\nu}) t)$ to (approximately) $x(t)$, see e.g. \cite{Nes04} for more details. In other words in $O(\sqrt{\nu})$ steps of Newton's method on $g$ one can approximately minimize a linear function on $\cK$. 

From a theoretical point of view, one of the most important results in the theory of IPM is Nesterov and Nemirovski's construction of the {\em universal barrier}, which is a $\nu$-self-concordant barrier that always satisfies $\nu \leq C n$, for some universal constant $C>0$. Theorem \ref{th:main} is the first improvement (for convex bodies) over this seminal result: we show that in fact there always exists a barrier with self-concordance parameter $\nu = (1+o(1))n$. 
Up to the second-order term, this improved self-concordance parameter is also optimal, as one must have $\nu \geq n$ for some convex sets (such as a simplex or a hypercube, see [Proposition 2.3.6., \cite{NN94}]). In fact, as we explain next, we can prove that there always exists a barrier with self-concordance parameter $\nu = n+1$ (and $\nu = n$ for convex cones\footnote{Daniel Fox pointed out to us that this bound had been conjectured by Osman G{\"u}ler since the mid-Nineties.}). This result was also independently obtained in \cite{Hil14, Fox15} with a different construction refered to as the {\em canonical barrier}. Thanks to its probabilistic interpretation the entropic barrier is much easier to analyze than the canonical barrier, and it can also be applied to problems where the canonical barrier or the universal barrier would give suboptimal results (an example is given in Section \ref{sec:linearbandit}). We discuss the connections between these three barriers in more details in Section \ref{sec:threebarriers}.

From a convex geometry point of view the entropic barrier is a natural object. Indeed, as demonstrated in several recent works, the log-Laplace transform is a useful tool in proving inequalities related to high dimensional convex bodies. As a cumulant-generating function associated with a given convex body, it provides an analytical viewpoint which is a central theme in proving bounds related to its distribution of mass (see e.g. \cite{Kla06,KM2011,EK11}). These bounds often boil down to proving relations satisfied by its derivatives, which are often of the same spirit as \eqref{eq:sc}. It seems conceivable that a better understanding of the entropic barrier may also be useful for proving such inequalities. Curiously, a function closely related to the canonical barrier was also implicitly used in \cite{Kla14} to prove inequalities of the same spirit.

It is interesting to observe that the self-concordance property (i.e., \eqref{eq:sc}) is dimension-free, while on the other hand the dimension plays a key role in inequality \eqref{eq:nusc}. For the entropic barrier the latter inequality is in a sense more delicate in that it extracts the fact that the marginal of an $n$-dimensional convex body has a strictly better behavior than mere log-concavity. It is somewhat mysterious that the effect of the dimension on the complexity of Interior Point Methods can be derived as a consequence of such a delicate local behavior.

\subsection{Universal, canonical, and entropic barriers for convex cones} \label{sec:threebarriers}
For a convex set $\cK$, we denote by $\cK^{\circ} = \{y \in \R^n : \sup_{x \in \cK} \langle y, x \rangle \leq 1\}$ the polar of $\cK$, and by $\cK^* = \{y \in \R^n : \inf_{x \in \cK} \langle y, x \rangle \leq 0 \}$ the dual cone of $\cK$ (note the sign difference with respect to the standard definition of a dual cone). Nesterov and Nemirovski's universal barrier is defined as follows, for any $x \in \cK$,
$$u(x) = \log \mathrm{vol} \left( (\cK - x)^{\circ} \right) .$$
An important point in the theory of Interior Point Methods is the so-called {\em analytical center} of $\cK$, defined as the minimizer of a self-concordant barrier. As a side note we observe that the analytical center for the universal barrier corresponds to the well-known Santal\'o point in convex geometry.

For the rest of this section $\cK$ denotes a proper convex cone. A function $g : \cK \to \R$ that satisfies $g(t x) = g(x) - \nu \log t$ for any $x \in \cK, t >0$ is said to be $\nu$-logarithmically homogeneous. It is well known that a $C^2$-smooth $\nu$-logarithmically homogeneous function satisfies \eqref{eq:nusc}, and furthermore if $g$ is $\nu$-self-concordant on $\cK$ then its Fenchel dual $g^*$ is $\nu$-self-concordant on the dual cone $\cK^*$ (see \cite{NN94}). We also recall that the characteristic function $\phi_{\cK}$ of $\cK$ is defined by, for any $x \in \cK$,
$$\phi_{\cK}(x) = \int_{\cK^*} \exp(\langle \theta, x \rangle ) d\theta .$$
In \cite{Gul96} it is shown that $u(x) = \log n! + \log \phi(x)$. Thus it is immediate that the universal barrier is $n$-logarithmically homogeneous, and using the argument we develop at the beginning of Section \ref{sec:proof} (i.e., the connection with moments of a log-concave distribution together with Lemma \ref{lem:32}) one can show that the universal barrier also satisfies \eqref{eq:sc}. In other words we obtain the following new result: the universal barrier is $n$-self-concordant on convex cones. This also implies that for any convex body there exists a $(n+1)$-self-concordant barrier, simply by considering the universal barrier on the conic hull of the convex body. Note that this latter construction is different from considering the universal barrier of the convex body itself (which is $C n$-self-concordant, for some numerical constant $C$, as proven by Nesterov and Nemirovski).

The definition of the entropic barrier given in Theorem \ref{th:main} does not directly apply to convex cones, in the sense that the Laplace transform $f$ is only defined for $\theta \in \cK^*$ (instead of $\theta \in \R^n$). Thus we naturally define the entropic barrier $e$ on a convex cone as the Fenchel dual of the logarithm of the characteristic function on the dual cone, that is $e(x)=(\log \phi_{\cK^*} )^*(x)$. In particular the entropic barrier on a cone is the Fenchel dual of the universal barrier on the dual cone, and thus it is also a $n$-self-concordant barrier.

The third barrier for convex cones with optimal self-concordance parameter is the canonical barrier $c$, introduced in \cite{Hil14, Fox15}. It is defined as the unique convex solution to the Monge-Amp\`ere equation $2 f = \log \mathrm{det} \nabla^2 f$ with the boundary condition $f|_{\partial \cK} = +\infty$. This equation is known to exhibit a unique solution on convex cones, but is generally not solvable on (compact) convex bodies. The Riemannian metric induced by the canonical barrier is also known as the K{\"a}hler-Einstein metric.  To gain some intuition in the defining equation of the canonical barrier, observe that $- \frac12 \mathrm{det} \nabla^2 f (x)$ is the log-volume of the {\em Dikin ellipsoid} $\cE_f(x)$ of $f$ at $x$ (recall that $\cE_f(x):=\{y \in \R^n : \nabla^2 f(x)[y-x,y-x] \leq 1\}$ and that for $f$ self-concordant one has $\forall x \in \mathrm{int}(\cK), \cE_f(x) \subset \cK$). In other words $c$ is the unique function whose value at point is given by the logarithm of the inverse volume of the Dikin ellipsoid at this point.

Perhaps surprisingly, \cite{Gul96} showed that in the case of homogeneous convex cones the three above barriers coincide (up to a constant). This connection is nontrivial, and somewhat mysterious to us. More generally the relations between the universal, canonical and entropic barriers remain quite elusive. For example, another interpretation of the canonical barrier is that $c(x)$ is equal to the differential entropy of a centered Gaussian with covariance given by $\nabla^2 c(x)$, while as we shall see in Section \ref{sec:expfamily} one has that $e(x)$ is equal to the differential entropy of a natural log-affine distribution (supported on $\cK$) whose covariance matrix is given by $\nabla^2 e(x)$.

We conclude this discussion with a comment on the importance of cones. It is well-known that for convex optimization one can assume without loss of generality that $\cK$ is a convex cone, which is why several authors focused on this case. However there are other applications of the theory of self-concordant barriers where it is important to have a (tractable) barrier for convex bodies too, such as in the sampling problem \cite{KN12}. In Section \ref{sec:linearbandit} we briefly describe another such application to an online learning problem, where in addition the probabilistic interpretation of the entropic barrier is essential (the described result cannot be obtained with the universal barrier or the canonical barrier).

\subsection{Computational aspects}
It is important to note that the universal, canonical, and entropic barriers are not (immediately at least) relevant in practice. Indeed, the computational effort to implement an IPM depends on the complexity of calculating gradients and Hessians for the barrier. The key to the practical success of IPM is that for important classes of convex sets there exist self-concordant barriers with efficiently computable gradients and Hessians. 
While this is certainly not immediately the case for the entropic barrier, there is some hope: for instance, its inverse Hessian corresponds to the covariance matrix of a simply described log-concave distribution (a similar statement is true for the universal barrier, but the distribution is more complicated to describe). Furthermore, it can be seen that given a membership oracle to $\cK$, there exists a randomized algorithm which approximates the value of this barrier at a given point in polynomial time. This can be done by sampling from the distribution $p_\theta$ (defined below) via standard techniques (see e.g., \cite{LV07}).


Finally we note that even in the simplest situation where $\cK$ is a polytope, it remained open until very recently (\cite{LS14}) to find an efficiently computable barrier with self-concordance parameter nearly matching the one of the universal barrier. We hope that our new barrier will help making progress in finding efficient and optimal barriers.

\section{A canonical exponential family} \label{sec:expfamily}
In this section we introduce and briefly study the canonical exponential family $\{p_{\theta}, \theta \in \R^n\}$ associated with $\cK$. For $\theta \in \R^n$, let $p_{\theta}$ be the probability measure on $\R^n$ whose density with respect to the Lebesgue measure at $x \in \R^n$ is
$$\exp(\langle \theta, x \rangle - f(\theta)) \ds1\{x \in \cK\} ,$$
where $f$ is as in \eqref{eq:deff}. In other words $f$ is the log-partition function for this exponential family. We denote $x(\theta) := \E_{X \sim p_{\theta}} X$. It is well-known (see e.g., [Section 3, \cite{Kla06}]) that $\theta \mapsto x(\theta)$ is a bijection between $\R^n$ and $\inte(\cK)$ (we denote $x \in \inte(\cK) \mapsto \theta(x)$ for the inverse mapping, which is onto $\R^n$), and that $f$ is strictly convex, $C^{\infty}$-smooth, and $\nabla f(\theta) = x(\theta)$. With these observations it is an elementary calculation to recover  a basic duality result for exponential families (see e.g. [Theorem 3.4., \cite{WJ08}]), namely that $f^*(x) = - H(p_{\theta(x)})$, where $H(p)$ is the differential entropy of $p$, 
defined by
$$
H(p) := - \int_{\R^n} p(x) \log p(x) dx.
$$
Hence the name {\em entropic barrier} for $f^*$. Recall also that $\nabla f^*(x) = \theta(x)$.

We will also need higher moments of $f$ and $f^*$. Let $\Sigma(\theta) := \E_{X \sim p_{\theta}} (X-x(\theta)) (X-x(\theta))^{\top}$, and $T(\theta) := \E_{X \sim p_{\theta}} (X-x(\theta)) \otimes (X-x(\theta)) \otimes (X-x(\theta))$. It is again an easy exercise (partly done in \cite{Kla06}) to show that $\nabla^2 f(\theta) = \Sigma(\theta)$, $\nabla^3 f(\theta) = T(\theta)$ and $\nabla^2 f^*(x) = \Sigma(\theta(x))^{-1}$ (see for example [(2.15), \cite{Nem04b}] for the latter equality). \\

We summarize the above in a lemma.
\begin{lemma}
The functions $f,f^*$ satisfy the following.
\begin{enumerate}[(i)]
\item The function $f$ is strictly convex on $\R^n$ and the function $f^*$ is strictly convex in the interior of $\mathcal{K}$.
\item The function $\theta(\cdot) = \nabla f^*(\cdot)$ is a bijection between the interior of $\mathcal{K}$ and $\R^n$.
\item One has for all $\theta \in \R^n$,
\begin{equation} \label{eq:secondder}
\nabla^2 f(\theta) = \E_{X \sim p_{\theta}} (X-x(\theta)) (X-x(\theta))^{\top} = \Sigma(\theta).
\end{equation}
and
\begin{equation} \label{eq:thirdder}
\nabla^3 f(\theta) = \E_{X \sim p_{\theta}} (X-x(\theta)) \otimes (X-x(\theta)) \otimes (X-x(\theta)) = T(\theta).
\end{equation}
\item One has for all $x \in \mathrm{int}(\mathcal{K})$,
\begin{equation} \label{eq:secondder2}
\nabla^2 f^*(x) = \left (\nabla^2 f(\theta(x))\right )^{-1} = \left (\E_{X \sim p_{\theta(x)}} (X-x) (X-x)^{\top} \right )^{-1} = \Sigma(\theta(x))^{-1}.
\end{equation}
\end{enumerate}
\end{lemma}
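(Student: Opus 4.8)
The plan is to derive everything from the explicit integral formula \eqref{eq:deff} for $f$ by differentiating under the integral sign, and then to transfer the conclusions to $f^*$ by Legendre duality. First I would observe that since $\cK$ is compact, the integrand $x\mapsto e^{\langle\theta,x\rangle}$ and all its $\theta$-derivatives are bounded on $\cK$ locally uniformly in $\theta$, so $\theta\mapsto\int_\cK e^{\langle\theta,x\rangle}dx$ is $C^\infty$ on $\R^n$ and may be differentiated under the integral sign to any order; hence $f$ is $C^\infty$ on $\R^n$. Writing $Z(\theta)=e^{f(\theta)}$, a direct computation gives $\nabla f(\theta)=\E_{X\sim p_\theta}X=x(\theta)$; differentiating once more and subtracting the rank-one outer product of the first moments yields
\[
\nabla^2 f(\theta)=\E_{X\sim p_\theta}(X-x(\theta))(X-x(\theta))^{\top}=\Sigma(\theta),
\]
and one further differentiation, collecting terms, produces exactly the centered third-moment tensor $T(\theta)$ of \eqref{eq:thirdder} --- this is the familiar statement that the successive derivatives of a cumulant generating function are the cumulants, with the third cumulant equal to the third central moment. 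Since $\cK$ has non-empty interior, $p_\theta$ is not supported on any affine hyperplane, so $\Sigma(\theta)\succ0$ for every $\theta$, which gives the strict convexity of $f$ on $\R^n$ and proves the first half of (i) together with (iii).

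For $f^*$ I would use that $f$, being finite, $C^\infty$ and strictly convex on all of $\R^n$, is a convex function of Legendre type. As recalled before the statement of the lemma, the gradient map $\nabla f=x(\cdot)$ is a bijection from $\R^n$ onto $\inte(\cK)$. By the standard theory of Legendre transforms the conjugate $f^*$ is then itself of Legendre type with open domain $\inte(\cK)$, and its gradient is the inverse of $\nabla f$; thus $\nabla f^*(x)=\theta(x)$ is a bijection from $\inte(\cK)$ onto $\R^n$, which is (ii). Applying the inverse function theorem to the $C^\infty$ diffeomorphism $\nabla f$ at the point $\theta(x)$ gives $\nabla^2 f^*(x)=\bigl(\nabla^2 f(\theta(x))\bigr)^{-1}=\Sigma(\theta(x))^{-1}$, which is (iv); and since $\Sigma(\theta(x))^{-1}\succ0$ this in turn yields the strict convexity of $f^*$ on $\inte(\cK)$, finishing (i).

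I expect no genuinely hard step here; the points deserving a little care are (a) the bookkeeping that identifies the third $\theta$-derivative with $T(\theta)$ and the justification of differentiation under the integral sign, both routine given compactness of $\cK$; (b) the surjectivity of $x(\cdot)$ onto the \emph{open} set $\inte(\cK)$ rather than merely into $\cK$, which is the one place the geometry of $\cK$ really enters and which I would either take from the cited reference or establish directly by showing $x(\theta)\in\inte(\cK)$ via a strict convex-combination argument together with properness of the map; and (c) citing the Legendre-duality facts --- conjugate of a Legendre-type function is Legendre type, and the two gradient maps are mutually inverse --- in exactly the form used above. Of these, (b) is the most delicate.
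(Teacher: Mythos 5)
Your proposal is correct and follows the standard cumulant-generating-function / Legendre-duality argument that the paper itself only sketches (the paper simply cites Klartag '06 and Nemirovski '04 for these facts rather than proving them). You rightly flag surjectivity of $\nabla f$ onto $\inte(\cK)$ as the one nontrivial point; the rest is the routine verification you describe, and your appeal to $f$ being of Legendre type (finite, $C^\infty$, strictly convex on all of $\R^n$) together with the inverse function theorem is exactly the right way to obtain (ii) and (iv).
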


Next we describe an application where the connection between the entropic barrier and the canonical exponential family $\{p_{\theta}\}$ is crucial.

\subsection{An application to the linear bandit problem} \label{sec:linearbandit}
We consider a sequential extension of linear optimization, known as {\em online linear optimization}. It can be described as the following sequential game: at each time step $t=1, \hdots, T$, a player selects an action $x_t \in \cK$, and simultaneously an adversary selects a cost vector $c_t \in \cK^{\circ}$ (where $\cK^{\circ}$ is the polar of $\cK$). Both the action and the cost are selected as a function of the history $(x_s, c_s)_{s < t}$, and possibly external randomness (independent for the player and the adversary). The player's perfomance at the end of the game is measured through the {\em regret}:
$$R_T = \sum_{t=1}^T \langle c_t, x_t\rangle - \min_{x \in \cK} \sum_{t=1}^T \langle c_t, x \rangle ,$$
which compares her cumulative cost to the best cumulative cost she could have obtained in hindsight with a fixed action, if she had known the sequence of costs played by the adversary. This problem has a long history, and a wealth of applications, see, e.g., \cite{CL06}. A far more challenging scenario is when the player only receives a limited feedback on the cost function. Of particular interest is the {\em bandit feedback}, where the player only observes her incurred cost $\langle c_t, x_t\rangle \in \R$, rather than the full cost vector $c_t \in \R^n$. See \cite{BC12} for a recent survey on bandit problems. In the following we show how Theorem \ref{th:main} gives a new point of view on some known results for online linear optimization with bandit feedback.

Since the seminal work of \cite{AHR08} it is known that self-concordant barriers play an important role in the design of good player's strategies. More precisely the latter paper proposed to run {\em Mirror Descent} (which was originally introduced in \cite{NY83}) with a self-concordant barrier as the mirror map. In addition to the choice of a barrier, one also needs to choose a sampling scheme, that is a mapping from actions to distributions over actions. A key insight of \cite{AHR08} is that the barrier and the sampling scheme should ``match'' each other, in the sense that the Hessian of the barrier should be approximately proportional to the inverse covariance of the sampling scheme. In \cite{AHR08} this is achieved with a sampling scheme supported on the Dikin ellipsoid, and they prove that with the universal barrier this yields $\E R_T = O(n^{3/2} \sqrt{T \log T})$. By using the entropic barrier together with the sampling scheme $x \mapsto p_{\theta(x)}$, it is easy to see that one can improve the bound to $\E R_T = O(n \sqrt{T \log T})$, thus matching the state of the art bound of \cite{BCK12} (which, up to the logarithmic factor, is the best possible universal bound). The improvement over \cite{AHR08} is due to the fact that the sampling scheme $p_{\theta(x)}$ makes a much better use of the available ``space'' around $x$ than the Dikin's ellipsoid sampling. We also note that \cite{BCK12} obtained their bound via exponential weights on a discretization of $\cK$, while it is easy to see that Mirror Descent with the entropic barrier and its associated sampling scheme exactly corresponds to continuous exponential weights, a strategy introduced in \cite{Cov91} for the full information case. In both cases one has to use the John exploration described in \cite{BCK12} to obtain the bound mentioned above, though one can envisage more efficient alternatives such as those described in \cite{HKM14}.
%
%
%
%
%
%
%

\section{Proof of Theorem \ref{th:main}} \label{sec:proof}
Since $\nabla f(\R^n) = \inte(\cK)$, a basic property of the Fenchel transform is that $f^*$ is a barrier for $\cK$. Next we show that $f^*$ is self-concordant on $\cK$ by proving that $f$ is self-concordant on $\R^n$ (the implication then follows from [Section 2.2., \cite{Nem04b}]). By definition, and using equation \eqref{eq:secondder} and \eqref{eq:thirdder}, $f$ is self-concordant if for any $\theta, h \in \R^n$,
$$\E_{X \sim p_{\theta}} \langle X-x(\theta) , h \rangle^3 \leq 2 \left(\E_{X \sim p_{\theta}} \langle X-x(\theta) , h \rangle^2 \right)^{3/2} .$$
Noting that $p_{\theta}$ is a log-concave measure one immediately obtains the above equation with a worse numerical constant from [(2.21), \cite{Led01}]. The numerical constant $2$ can be obtained via the following lemma, whose proof can be found in Section \ref{sec:tech}.
\begin{lemma} \label{lem:32}
Let $X$ be a real log-concave and centered random variable. Then
$$\E X^3 \leq 2 \left(\E X^2\right)^{3/2}.$$
\end{lemma}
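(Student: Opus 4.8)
The plan is to reduce the inequality to a one-parameter family of worst cases via a normalization and a symmetrization, and then carry out an explicit computation on that family. First I would normalize: by scaling we may assume $\E X^2 = 1$, so the goal becomes $\E X^3 \le 2$. Write the density of $X$ as $e^{-V(x)}$ with $V$ convex (possibly taking the value $+\infty$ outside an interval). I would like to argue that the extremal case is when $V$ is \emph{affine} on its support, i.e. $X$ is (a shift/reflection of) an exponential random variable truncated to a half-line, or a uniform on an interval in the degenerate limit. A clean way to see this is to fix the mean $\E X = m$ and the second moment, and maximize $\E X^3$ over log-concave densities subject to these two linear constraints plus normalization; by a standard variational / bang-bang argument the maximizer has $\log p$ piecewise linear with at most a bounded number of pieces, and log-concavity forces the ``knot'' structure to collapse so that the optimal density is $c \, e^{-\lambda x}$ on a half-line (the cube $e^{-\lambda x}\IND\{x\in[a,b]\}$ case is handled by letting an endpoint go to infinity or by direct check).

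With the family reduced to the exponential distribution, the computation is explicit: if $Y \sim \mathrm{Exp}(1)$ then $\E Y = 1$, $\Var(Y) = 1$, and the centered third moment is $\E(Y-1)^3 = 2$. Rescaling to unit variance and accounting for the reflection $X = \pm(Y - 1)$ gives exactly $\E X^3 \le 2 = 2(\E X^2)^{3/2}$, with equality in the limiting exponential case; this also pins down why the constant $2$ (and not something larger, as one would get from the crude bound in [(2.21), \cite{Led01}]) is sharp. I would state this equality case explicitly since the abstract advertises the bound as sharp.

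The main obstacle is making the ``extremal density is a truncated exponential'' step rigorous rather than heuristic. One must be careful that the supremum of $\E X^3$ under the constraints is actually attained (log-concave densities with two fixed moments are tight, so a compactness argument in a suitable topology works), and that the variational argument correctly identifies the optimizer: perturbing $\log p$ by a function $\psi$ with $\int \psi p = \int x\psi p = \int x^2 \psi p = 0$ while preserving concavity of $\log p$ shows any interior ``strictly concave'' behavior can be exchanged for mass that increases $\E X^3$, forcing $\log p$ affine on an interval or half-line. An alternative, possibly cleaner route that sidesteps the optimization entirely: integrate by parts using $p' = -V' p$ to express $\E X^3$ in terms of $\E X^2 \cdot V'$-type quantities, then exploit convexity of $V$ (so $V'$ is nondecreasing) together with $\E X = 0$ via a correlation/Chebyshev-type inequality between the increasing function $V'$ and powers of $X$. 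I expect the first approach to be the one the authors take, since Section~\ref{sec:tech} is advertised as deriving this ``sharp bound'' and the natural sharp-constant proof is the reduction to exponentials; but I would keep the integration-by-parts identity in reserve as it is the most robust elementary tool for moment inequalities of log-concave variables.
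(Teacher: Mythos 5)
Your reduction to log-affine densities (truncated exponentials) is indeed the route the paper takes, and the final computation $\E(Y-1)^3 = 2$ for $Y \sim \mathrm{Exp}(1)$ correctly identifies the sharp constant and the equality case. But there are two genuine gaps. First, the variational/bang-bang step is exactly what needs a rigorous tool: the paper invokes the localization theorem of Fradelizi and Gu\'edon (their Theorems 1 and 2), which characterizes the extreme points of the set of log-concave measures subject to \emph{one} linear moment constraint $\int g \, d\mu \geq 0$. Your plan fixes two constraints ($\E X = 0$ and $\E X^2 = 1$), which does not directly fit that theorem. The paper sidesteps this by dropping the centering constraint entirely and instead maximizing $\Psi(\mu) = \int (x^3 - 3x)\,d\mu$ over the one-constraint set $\{\mu : \int (1-x^2)\,d\mu \geq 0\}$; since $\Psi(\mu) = \int x^3\,d\mu$ whenever $\mu$ is centered, a bound of $2$ on $\Psi$ over this larger family suffices. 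This choice of test functional (note the $-3x$) is also what makes the Dirac case come out to exactly $\sup_{|x| \le 1}(x^3 - 3x) = 2$; with bare $x^3$ that step would not close.

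Second, once you are down to truncated exponentials $Z^{-1} e^{cx}\mathbf{1}_{[a,b]}$, computing a single limiting member (the untruncated exponential) and noting it gives $2$ does not prove the bound over the whole family. The paper has to verify $H(a,b,c) \le 2$ over all $(a,b,c)$ satisfying the moment constraint $G(a,b,c)=0$ with one endpoint in $[-1,1]$ (that endpoint restriction comes from Fradelizi--Gu\'edon's condition (iii)). It does this by eliminating one variable through the constraint, reparametrizing by $(a,r)$ with $r = e^{c(b-a)}$, and then establishing that $H(a,r)$ is monotone in both arguments with $\lim_{r\to 0^+} H(-1,r) = 2$. That explicit verification, or something equivalent, is required; ``letting an endpoint go to infinity'' only checks a boundary case, not that the supremum is attained there. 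Your integration-by-parts fallback is not what the paper does and would still need to recover the sharp constant, which is the hard part here.
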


We now move to the main part of the proof, which is to bound the self-concordance parameter $\nu$ of $f^*$ by $(1+\epsilon_n)n$. By setting $h= (\nabla^2 f^{*}(x))^{-1/2} w$, equation \eqref{eq:nusc} becomes
$$
\left \langle (\nabla^2 f^{*}(x))^{-1/2} \nabla f^{*}(x), w \right \rangle  \leq \sqrt{\nu \langle w,w \rangle} .
$$
and therefore, \eqref{eq:nusc} is equivalent to
$$\left \langle \left(\nabla^2 f^{*}(x)\right)^{-1} \nabla f^{*}(x), \nabla f^{*}(x) \right \rangle \leq \nu .$$
Thus, according to equation \eqref{eq:secondder2}, we have to show that for any $\theta \in \R^n$,
$$\langle \Sigma(\theta) \theta, \theta \rangle \leq (1+\epsilon_n) n .$$
In other words, considering the random variable $Y = \left \langle \frac{\theta}{\|\theta\|}, X \right \rangle$, with $X \sim p_\theta$, the proof will be concluded by showing that
\begin{equation} \label{eq:toshow}
\mathrm{Var}(Y) \leq \frac{n}{\|\theta\|^2}(1+\epsilon_n) .
\end{equation}
We denote by $\rho$ the density of $Y$, which is proportional to
\begin{equation} \label{eq:marginaldensity}
\mathrm{Vol}_{n-1}\left(\cK \cap \{y \theta / \|\theta\| + \theta^{\perp} \} \right) \exp( y \|\theta\| ) .
\end{equation}
At this point we observe that, without loss of generality, we can assume that $\rho$ is a $C^{\infty}$-smooth function in the interior of its support. Indeed, consider a sequence $\cK_1 \subset \cK_2  \subset ...$ of convex bodies with a $C^{\infty}$-smooth boundary which satisfy $\bigcup_{k} \cK_k = \cK$, and let $p_\theta^{(k)}$ be the canonical exponential family associated with $\cK_k$. Then for all $\theta$ we have that $p_\theta^{(k)}$ converges weakly to $p_\theta$, which implies that the covariance matrix of $p_\theta^{(k)}$ converges (in operator norm) to $\Sigma(\theta)$ as $k \to \infty$. Therefore, it is enough to verify equation \eqref{eq:toshow} for $p_\theta^{(k)}$. By the smoothness and compactness of $\cK_k$, the marginal $\rho$ will be a smooth function on its support.

The most technical step of the proof is the following lemma, which relies on the log-concavity properties of the $1$-dimensional marginals of the uniform measure on $\cK$, and which states that $Y$ is "locally" sub-Gaussian. We give a proof of this lemma at the end of this section.

\begin{lemma} \label{lem:key}
Let $y_0 \in \argmax_{y \in \R} \rho(y)$, $M = \frac{\sqrt{7 n \log(n)}}{\|\theta\|}$, and $\sigma^2 = \frac{n}{\|\theta\|^2} \frac{1}{1 - \sqrt{7 \log(n) / n}}$. 
There exists $\zeta : [-M, M] \rightarrow [0, 1]$, increasing on $[-M,0]$, decreasing on $[0,M]$, and with $\zeta(0) = 1$, such that for any $y \in [-M,M]$,
$$\rho(y + y_0) = \rho(y_0) \zeta(y) \exp\left( - \frac{y^2}{2 \sigma^2} \right) .$$
\end{lemma}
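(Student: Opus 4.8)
The plan is to translate the problem to the origin and reduce it to a one-dimensional ODE comparison. Write $a=\|\theta\|$; we may assume $a>0$, since \eqref{eq:toshow} is trivial when $\theta=0$. Set $w(y):=\log\rho(y+y_0)$ and $q:=w'$. By \eqref{eq:marginaldensity} together with the Brunn--Minkowski inequality applied to the $(n-1)$-dimensional slices of $\cK$ orthogonal to $\theta$, the density $\rho(\,\cdot\,+y_0)$ is proportional to $\tilde\phi(y)^{n-1}e^{ay}$ for some concave $\tilde\phi\geq 0$ which, by the smoothing reduction already made above, is $C^{\infty}$ on the interior of its support. Hence $w(y)=(n-1)\log\tilde\phi(y)+ay+\mathrm{const}$, and since $y_0$ is an interior maximizer of $\rho$ we get $q(0)=0$. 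Take as candidate witness $\zeta(y):=\rho(y+y_0)/\bigl(\rho(y_0)e^{-y^2/2\sigma^2}\bigr)$; then $\zeta\geq 0$, $\zeta(0)=1$, and $(\log\zeta)'(y)=q(y)+y/\sigma^2$. Thus it suffices to prove $(\log\zeta)'\geq 0$ on $[-M,0]$ and $(\log\zeta)'\leq 0$ on $[0,M]$: this forces $\log\zeta$ to increase and then decrease with maximal value $\log\zeta(0)=0$, giving at once the monotonicity of $\zeta$ and $0\leq\zeta\leq 1$.

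The convexity input enters through a Riccati-type inequality. Since $\tilde\phi\geq 0$ is concave, $(\log\tilde\phi)''=\tilde\phi''/\tilde\phi-\bigl((\log\tilde\phi)'\bigr)^2\leq-\bigl((\log\tilde\phi)'\bigr)^2$, and substituting $q=(n-1)(\log\tilde\phi)'+a$ this reads
\[
q'(y)\ \leq\ -\frac{(q(y)-a)^2}{n-1},\qquad q(0)=0.
\]
I would then compare $q$ with the solution $q_{\mathrm{ext}}(y)=\frac{a^2y}{ay-(n-1)}$ of the associated equation $q_{\mathrm{ext}}'=-(q_{\mathrm{ext}}-a)^2/(n-1)$, $q_{\mathrm{ext}}(0)=0$. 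Since the right-hand side is locally Lipschitz, a standard one-sided comparison (forwards for $y\geq0$, and after the reflection $y\mapsto-y$ for $y\leq0$) gives $q(y)\geq q_{\mathrm{ext}}(y)$ for $y\leq0$ and $q(y)\leq q_{\mathrm{ext}}(y)$ for $y\geq0$.

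It remains to insert the prescribed values of $M$ and $\sigma^2$. For $y\leq0$ one has $q_{\mathrm{ext}}(y)=\frac{a^2|y|}{n-1+a|y|}$, increasing in $|y|$, so on $[-M,0]$ it is at least $\frac{a^2|y|}{n-1+aM}=\frac{a^2|y|}{n-1+\sqrt{7n\log n}}$; an elementary computation gives $\bigl(n-1+\sqrt{7n\log n}\bigr)\bigl(1-\sqrt{7\log(n)/n}\bigr)=n-1-7\log n+\sqrt{7\log(n)/n}\leq n$ for $n\geq80$ (which also ensures $aM<n-1$, so that $q_{\mathrm{ext}}$ is finite on $[-M,M]$), whence $q_{\mathrm{ext}}(y)\geq|y|/\sigma^2$ and $(\log\zeta)'(y)\geq0$ there. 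For $y\geq0$, $q_{\mathrm{ext}}(y)=-\frac{a^2y}{n-1-ay}\leq-\frac{a^2y}{n-1}\leq-y/\sigma^2$, so $(\log\zeta)'(y)\leq0$. Points of $[-M,M]$ outside the support of $\rho(\,\cdot\,+y_0)$ cause no trouble, since there $\zeta$ vanishes. The one genuinely non-routine step is recognizing that the finite-dimensional ($\tfrac1{n-1}$-concave) behavior of the one-dimensional marginals of $\cK$ produces a Riccati inequality whose explicit extremal solution is --- precisely because of the definitions of $M$ and $\sigma^2$ --- just barely strong enough; everything else is bookkeeping.
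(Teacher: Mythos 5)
Your proof is correct and essentially mirrors the paper's: both derive a Riccati-type differential inequality for $q=(\log\rho(\cdot+y_0))'$ from Brunn--Minkowski, compare against the explicit extremal solution vanishing at the origin (the paper's substitution $w(y)=v'(y+y_0)+\frac{\|\theta\|^2 y}{n-\|\theta\|y}$ is exactly $q-q_{\mathrm{ext}}$ with $n$ in place of $n-1$), and then check that the prescribed $M$ and $\sigma^2$ make the resulting bound close. The one cosmetic difference is that you invoke the sharper $(n-1)$-concavity of the one-dimensional marginal, whereas the paper settles for the weaker $n$-concavity (keeping the arithmetic aligned more directly with $\sigma^2=\tfrac{n}{\|\theta\|^2}(1-\sqrt{7\log(n)/n})^{-1}$); both routes work.
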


The above lemma implies that, conditionally on $|Y-y_0| \leq M$, the random variable $|Y-y_0|$ is stochastically dominated by $|\cN(0, \sigma^2)|$. Indeed, the density of $|Y|$, conditioned on $|Y-y_0| \leq M$, with respect to the law of $|\cN(0, \sigma^2)|$ is equal to $q(y) := Z (\zeta(y) + \zeta(-y)) \mathbf{1}_{y<M}$ for a normalization constant $Z$. Since $(\zeta(y) + \zeta(-y))$ is non-increasing, we learn that there exists $t>0$ such that $q(y) \geq 1$ for $y \in [0,t]$ and $q(y) \leq 1$ for $y > t$ which confirms the assertion. 

This implies in particular
\begin{equation} \label{eq:sigma}
\E\left(|Y-y_0|^2  \ | \ |Y-y_0| \leq M \right) \leq \sigma^2 .
\end{equation}
It remains to show that the above conditional variance bound implies \eqref{eq:toshow}. For this we use another technical result, whose proof can be found in Section \ref{sec:tech}:
\begin{lemma} \label{lem:tail}
Let $\eps > 0$, and $X$ a real log-concave random variable with density $\lambda$. Let $x_1 < x_0 < x_2$ be three points satisfying $\lambda(x_1) < \eps \lambda(x_0)$ and $\lambda(x_2) < \eps \lambda(x_0)$. Then, with $c(\epsilon)= \left(1+ \frac{2}{\log(1/\epsilon)} \right)^3 \left(1+ \frac{2}{\log(1/\epsilon)} + \frac{2}{\log^2(1/\epsilon)}\right)$, one has
\begin{eqnarray*}
\left (1 - 2 c(\epsilon) \epsilon \log^2(1/\epsilon) \right ) \mathrm{Var}(X) & \leq & \int_{x_1}^{x_2} (x-x_0)^2 \lambda(x) dx  \\
& \leq & \E( |X-x_0|^2 \ | \ X \in [x_1, x_2]) .
\end{eqnarray*}
\end{lemma}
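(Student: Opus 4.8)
The plan is to exploit log-concavity of $\lambda$ to control the tails outside $[x_1,x_2]$ in terms of the bulk, and then translate this into the two claimed inequalities. First I would normalize: by scaling and translating we may assume $x_0 = 0$ and $\mathrm{Var}(X) = 1$ is not convenient, so instead I will work directly but set $m := \lambda(x_0)$ and note that log-concavity of $\lambda$ forces $\lambda(x) \le m \epsilon^{(x-x_2)/(x_2-x_0) \cdot (\cdots)}$ — more precisely, since $\log \lambda$ is concave and $\log\lambda(x_2) < \log m + \log\epsilon$, for every $x > x_2$ we have $\log\lambda(x) \le \log\lambda(x_2) + \frac{x-x_2}{x_2-x_0}(\log\lambda(x_2)-\log\lambda(x_0)) \le \log m + \frac{x-x_0}{x_2-x_0}\log\epsilon$, i.e. $\lambda$ decays at least geometrically past $x_2$ with ratio governed by $\epsilon$ over the scale $x_2 - x_0$; symmetrically past $x_1$. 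Denote $\delta := \min(x_0 - x_1, x_2 - x_0)$, the half-scale of the bulk interval.

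The second (upper) inequality in the statement is the easy one: $\int_{x_1}^{x_2}(x-x_0)^2\lambda(x)\,dx \le \int_{-\infty}^{\infty} \IND\{x\in[x_1,x_2]\}(x-x_0)^2\lambda(x)\,dx = \P(X\in[x_1,x_2])\cdot \E(|X-x_0|^2 \mid X\in[x_1,x_2])$, and since $\P(X\in[x_1,x_2])\le 1$ we are done. For the first (lower) inequality I would write $\mathrm{Var}(X) = \E(X-x_0)^2 - (\E X - x_0)^2 \le \E(X-x_0)^2 = \int_{x_1}^{x_2}(x-x_0)^2\lambda(x)\,dx + \int_{\R\setminus[x_1,x_2]}(x-x_0)^2\lambda(x)\,dx$, so it suffices to bound the tail integral by $2c(\epsilon)\epsilon\log^2(1/\epsilon)\,\mathrm{Var}(X)$. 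Using the geometric-decay bound above, $\int_{x_2}^\infty (x-x_0)^2\lambda(x)\,dx \le m\int_{x_2}^\infty (x-x_0)^2 \epsilon^{(x-x_0)/(x_2-x_0)}\,dx$; substituting $u = (x-x_0)/(x_2-x_0)$ this is $m(x_2-x_0)^3\int_1^\infty u^2 \epsilon^u\,du = m(x_2-x_0)^3 \epsilon \cdot \frac{1}{\log(1/\epsilon)}\left(1 + \frac{2}{\log(1/\epsilon)} + \frac{2}{\log^2(1/\epsilon)}\right)$ after integration by parts (this is where the $(1 + 2/\log(1/\epsilon) + 2/\log^2(1/\epsilon))$ factor of $c(\epsilon)$ comes from, and the leading $\frac{1}{\log(1/\epsilon)}$ combines with an $\epsilon\log^2(1/\epsilon)$ later). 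The analogous bound holds on $(-\infty,x_1]$. The remaining task is to bound $m(x_2-x_0)^3$ (and the symmetric term) by a constant times $\mathrm{Var}(X)$: here I would use that $\mathrm{Var}(X) \ge c'\cdot (\text{scale of } X)^2$ for a log-concave density, and that the scale of $X$ is at least comparable to $\delta$ because $\lambda$ cannot drop by a factor $\epsilon$ over a distance much smaller than where its own standard deviation lives — concretely, for a one-dimensional log-concave density with maximum value $m$ one has the standard sandwich $\frac{1}{m} \asymp$ (standard deviation up to absolute constants), and the hypothesis $\lambda(x_i) < \epsilon m$ forces $x_2 - x_0 \le O(\log(1/\epsilon)) \cdot \frac{1}{m} \le O(\log(1/\epsilon))\sqrt{\mathrm{Var}(X)}$, hence $m(x_2-x_0)^3 \le O(\log^3(1/\epsilon))\sqrt{\mathrm{Var}(X)}$... which is not quite enough dimensionally, so instead I would bound $m(x_2-x_0)^3 \le (x_2-x_0)^2 \cdot m(x_2-x_0) \le (x_2-x_0)^2 \cdot O(\log(1/\epsilon))$ using $m(x_2-x_0) = O(\log(1/\epsilon))$ from geometric decay, and then $(x_2-x_0)^2 \le O(\log^2(1/\epsilon))\,\mathrm{Var}(X)$ — combining gives the tail integral $\le O(\epsilon\log^2(1/\epsilon))\,\mathrm{Var}(X)$ with the constant tracked as $c(\epsilon)$.

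The main obstacle, and the step requiring the most care, is getting the constant exactly right: the factor $c(\epsilon)$ in the statement is explicit, so I cannot be cavalier about absolute constants. The cube $\left(1 + \frac{2}{\log(1/\epsilon)}\right)^3$ in $c(\epsilon)$ strongly suggests that the clean way to bound $(x_2 - x_0)$ is via a Markov-type / Grünbaum-type estimate relating $(x_2-x_0)$ to $\int_{x_1}^{x_2}(x-x_0)^2\lambda$ itself rather than to $\mathrm{Var}(X)$ directly, so I would set up the argument so that the final inequality reads $\mathrm{Var}(X) \le \int_{x_1}^{x_2}(\cdots) + (\text{tail}) \le \int_{x_1}^{x_2}(\cdots)\big(1 + 2c(\epsilon)\epsilon\log^2(1/\epsilon)\big)$ and then rearrange — i.e. I expect the cleanest route is to bound the tail integral by $2c(\epsilon)\epsilon\log^2(1/\epsilon)$ times the truncated second moment $\int_{x_1}^{x_2}(x-x_0)^2\lambda$, not times $\mathrm{Var}(X)$, and only at the very end use $\int_{x_1}^{x_2}(x-x_0)^2\lambda \le \mathrm{Var}(X) + (\text{tail})$ once more; unwinding this fixed-point inequality (and checking it is non-vacuous, i.e. $2c(\epsilon)\epsilon\log^2(1/\epsilon) < 1$, which holds for small $\epsilon$) should produce precisely the stated constant. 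Bookkeeping the three powers of $\left(1+\tfrac{2}{\log(1/\epsilon)}\right)$ — one from each of the three appearances of "geometric decay over scale $x_2-x_0$" in controlling $m$, $(x_2-x_0)$, and the moment — is the delicate part.
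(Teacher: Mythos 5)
Your core computation — log-concavity gives $\lambda(x)\le \lambda(x_0)\,\epsilon^{(x-x_0)/(x_2-x_0)}$ past $x_2$, and integrating $u^2\epsilon^u$ over $[1,\infty)$ produces the factor $\bigl(1+\tfrac{2}{\log(1/\epsilon)}+\tfrac{2}{\log^2(1/\epsilon)}\bigr)$ — is exactly the one the paper uses, and your observation that the upper inequality is trivial is also correct. But the route you then take diverges from the paper in a way that leaves a genuine gap. You try to bound the tail integral \emph{relative to} $\mathrm{Var}(X)$ (or, in your fixed-point variant, relative to $\int_{x_1}^{x_2}(x-x_0)^2\lambda$), which forces you to control $\lambda(x_0)(x_2-x_0)^3/\mathrm{Var}(X)$, and your claim that $\lambda(x_0)(x_2-x_0)=O(\log(1/\epsilon))$ does not follow from geometric decay alone: log-concavity gives only an upper bound on $\lambda$ beyond $x_2$, not any control on how wide the ``bulk'' between $x_0$ and $x_2$ can be relative to $1/\lambda(x_0)$. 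You flag this yourself (``the main obstacle... is getting the constant exactly right''), but it is more than bookkeeping: as written the argument does not close.

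The paper sidesteps this entirely with a different normalization. It translates so $x_0=0$ and scales so that the \emph{uncentered} second moment $\E(X^2)=1$ (not $\mathrm{Var}(X)=1$). Then $\mathrm{Var}(X)\le\E(X^2)=1$, so it suffices to prove the \emph{absolute} bound $\int_{x_1}^{x_2}x^2\lambda\,dx\ge 1-2c(\epsilon)\epsilon\log^2(1/\epsilon)$; no comparison of the tail against $\mathrm{Var}(X)$ is ever needed, which is what makes the small-variance regime harmless. Under that normalization, two external facts from Lov\'asz--Vempala supply the missing pieces: $\lambda(x_0)\le 1$ (so $\lambda(x_2)\le\epsilon$), and one may assume $x_2\le\log(1/\epsilon)+2$. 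Plugging these into your integral yields the prefactor $\bigl(x_2/\log(1/\epsilon)\bigr)^3\le\bigl(1+\tfrac{2}{\log(1/\epsilon)}\bigr)^3$, which together with your $(1+\tfrac{2}{\log(1/\epsilon)}+\tfrac{2}{\log^2(1/\epsilon)})$ gives $c(\epsilon)$ exactly. The two concrete things your proposal is missing are therefore: (i) the choice of $\E(X-x_0)^2=1$ rather than $\mathrm{Var}(X)=1$ as the normalization, which converts a relative tail bound into an absolute one; and (ii) the density bound $\lambda(x_0)\le 1$ and the diameter bound $x_2-x_0\le\log(1/\epsilon)+2$, which are what pin down the prefactor.
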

Thanks to Lemma \ref{lem:key}, we know that 
$$\max(\rho(y_0 - M), \rho(y_0+M)) \leq \left( \frac{1}{n} \right)^{\frac{7}{2} \left(1 - \sqrt{\frac{7 \log(n)}{n}} \right)} \rho(y_0) ,$$
and thus Lemma \ref{lem:tail} together with \eqref{eq:sigma} imply that, with $\epsilon=\left( \frac{1}{n} \right)^{\frac{7}{2} \left(1 - \sqrt{\frac{7 \log(n)}{n}} \right)}$,
$$\left (1 - \frac{70}{n} \right) \mathrm{Var}(Y) \leq \sigma^2 ,$$
which proves \eqref{eq:toshow}.

We now conclude the proof of Theorem \ref{th:main} with the proof of Lemma \ref{lem:key}.
\begin{proof}
Let $\lambda$ be the $1$-dimension marginal of the uniform measure on $\cK$ in the direction $\theta / \|\theta\|$, that is for $y \in \R$,
$$\lambda(y) = \frac{\mathrm{Vol}_{n-1}\left(\cK \cap \{y \theta / \|\theta\| + \theta^{\perp} \} \right)}{\mathrm{Vol}(\cK)} .$$
We already observed in \eqref{eq:marginaldensity} that
$$\rho(y) = \frac{\lambda(y) \exp(y \|\theta\|)}{\int_{\R} \lambda(s) \exp(s \|\theta\|) ds} .$$
It will be useful to consider the functions $u(y) = \log \lambda(y)$ and 
$$v(y) = \log \rho(y) = u(y) + y \|\theta\| - \log \left( \int_{\R} \lambda(s) \exp(s \|\theta\|) ds \right) .$$
Since we assumed that $\cK$ is a $C^{\infty}$-smooth domain, it is clear that $\lambda$ and $\rho$ are also $C^{\infty}$ on the interior of their support $[a,b]$, where
$$a = \inf \{s \in \R : \lambda(s) > 0 \}, ~~ b = \sup \{s \in \R : \lambda(s) > 0\}.$$
The key observation is that, thanks to the Brunn-Minkowski inequality, $\lambda$ is $n$-concave on its suppport (see, e.g., \cite{Bor75}), or in other words $\lambda^{1/n}$ is a concave function on the interval $(a,b)$. We now obtain a simple differential inequality by using the following lemma, whose proof can be found in Section \ref{sec:tech}:
\begin{lemma} \label{lem:simple}
Let $\phi \in C^2((a,b))$, and $\zeta(x) = \log \phi(x)$. Then 
$$
\phi \mbox{ is } n\mbox{-concave in } (a,b) \Leftrightarrow \zeta'' \leq - \tfrac 1 n (\zeta')^2 \mbox{ in } (a,b).
$$
\end{lemma}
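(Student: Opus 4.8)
The statement is essentially a direct computation via the chain rule, so the plan is simply to unwind the definition of $n$-concavity and differentiate twice. Recall that $\phi$ being $n$-concave on $(a,b)$ means, by definition, that $\psi := \phi^{1/n}$ is concave on $(a,b)$; since $\phi > 0$ there (so that $\zeta = \log \phi$ is well-defined) and $\phi \in C^2$, the function $\psi = \exp(\zeta/n)$ is also $C^2$, and concavity of a $C^2$ function on an interval is equivalent to $\psi'' \leq 0$ pointwise on that interval.

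The remaining step is to compute $\psi''$ in terms of $\zeta$. Writing $\psi = \exp(\zeta/n)$, the chain rule gives $\psi' = \tfrac{1}{n} \zeta' \exp(\zeta/n)$ and then
$$
\psi'' = \frac{1}{n} \exp(\zeta/n) \left( \zeta'' + \frac{1}{n} (\zeta')^2 \right).
$$
Since $\tfrac{1}{n}\exp(\zeta/n) > 0$ everywhere on $(a,b)$, the sign of $\psi''(x)$ agrees with the sign of $\zeta''(x) + \tfrac{1}{n}(\zeta'(x))^2$ for each $x$. Hence $\psi'' \leq 0$ on $(a,b)$ if and only if $\zeta'' \leq -\tfrac{1}{n}(\zeta')^2$ on $(a,b)$, which is exactly the claimed equivalence.

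There is no real obstacle here; the only points worth stating explicitly are that $\phi$ must be positive for $\zeta$ to make sense, and that for $C^2$ functions concavity is characterized by the nonpositivity of the second derivative, so that the pointwise inequality on $\psi''$ is genuinely equivalent to concavity of $\psi$ (rather than merely implied by it).
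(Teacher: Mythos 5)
Your proof is correct and takes essentially the same approach as the paper: both reduce to the observation that, with $\psi=\phi^{1/n}=\exp(\zeta/n)$, one has $\psi''=\tfrac{1}{n}\psi\bigl(\zeta''+\tfrac{1}{n}(\zeta')^2\bigr)$, so the sign of $\psi''$ matches that of $\zeta''+\tfrac{1}{n}(\zeta')^2$. The paper phrases the computation as $\xi''=\psi''/\psi-(\xi')^2$ with $\xi=\zeta/n$, but this is the same identity rearranged.
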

The result of the lemma directly yields
$$ v''(y) = u''(y) \leq - \frac1{n} (u'(y))^2 = - \frac1{n} (v'(y) - \|\theta\|)^2 .$$
It is useful to rewrite the above inequality in terms of $w(y) := v'(y+y_0) + \frac{\|\theta\|^2 y}{n - \|\theta\| y}$, $y \in (a', b')$, with $a'=a-y_0, b'=\min(b-y_0, n/\|\theta\|)$, in which case one easily obtains
$$- w'(y) \geq \frac1{n} \left(w(y)^2 - 2 w(y) \frac{\|\theta\| n}{n - \|\theta\| y} \right).$$
Observe that $w(0) = 0$, as $y_0$ is a local maximum of the smooth function $v$. A simple application of Gronwall's inequality teaches us that $w$ is non-positive in the interval $[0, b')$ and non-negative in the interval $(a',0]$. We conclude that, for $y \in (a',b')$,
\begin{eqnarray*}
v(y+y_0) - v(y_0) & = & \int_{0}^{y} v'(s+y_0) ds \\
& = & - \int_{0}^{y} \frac{\|\theta\|^2 s}{n - \|\theta\| s} ds + \int_{0}^{y} w(s) ds \\ 
& = & \|\theta\|y + n \log(1 - \|\theta\| y / n) + \int_{0}^{y} w(s) ds \\
& = & - \left(1 - \sqrt{\frac{7 \log(n)}{n}} \right) \frac{y^2}{2 n /\|\theta\|^2} + n h\left(\frac{\|\theta\| y}{n}, \sqrt{\frac{7 \log(n)}{n}}\right) + \int_{0}^{y} w(s) ds ,
\end{eqnarray*}
where $h(x, \epsilon) = x + (1-\epsilon) \frac{x^2}{2} + \log(1-x)$. For any $\epsilon \in (0,1)$, $x \mapsto h(x, \epsilon)$ is increasing on $[- \epsilon/(1-\epsilon), 0]$ and decreasing on $[0,1)$. In particular, denoting $a''= \min(a', \sqrt{7 n \log(n)} / \|\theta\|)$ and $\sigma^2$ as in the statement of the lemma, we showed that there exists a function $\xi : (a'', b') \rightarrow \R_-$, increasing on $(a'',0)$, decreasing on $[0, b')$, with $\xi(0) = 0$, and such that
$$v(y+y_0) - v(y_0) = - \frac{y^2}{2 \sigma^2} + \xi(y) .$$
This easily concludes the proof.
\end{proof}

\section{Technical lemmas} \label{sec:tech}
In this section we prove Lemma \ref{lem:32}, \ref{lem:tail}, \ref{lem:simple} that were used in the preceding section to prove Theorem \ref{th:main}. In each case we first restate the lemma before going into the proof.

\setcounter{lemma}{1}
\begin{lemma}
Let $X$ be a real log-concave and centered random variable. Then
$$\E X^3 \leq 2 \left(\E X^2\right)^{3/2}.$$
\end{lemma}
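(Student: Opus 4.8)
The plan is to show that, after normalizing (by homogeneity we may assume $X$ is centered with $\E X^2 = 1$, so the claim becomes $\E X^3 \le 2$), the extremal centered log-concave variable of unit variance is the \emph{shifted exponential} $X = E-1$, where $E$ is standard exponential: one computes $\E X^3 = \E E^3 - 3\E E^2 + 3\E E - 1 = 6 - 6 + 3 - 1 = 2$, so $2$ is attained and the content of the lemma is that it cannot be exceeded. First I would set $S := \sup \E X^3$, the supremum over all centered log-concave densities with $\E X^2 = 1$, and argue it is finite and attained: such densities have all moments bounded by universal constants (via the standard uniform sub-exponential tail bound for isotropic log-concave variables), so the class is tight, weak limits remain log-concave with variance $1$ (hence non-degenerate, with a log-concave density), and both $X \mapsto \E X^3$ and the two normalization functionals are continuous along weakly convergent sequences by uniform integrability. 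Thus $S$ is achieved by some $\lambda^\star = e^{u^\star}$; as in the proof of Theorem~\ref{th:main} one may also assume, via a routine smoothing approximation, that $u^\star$ is as regular as needed.

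The heart of the matter is to identify $\lambda^\star$: I claim $u^\star$ is \emph{affine} on its support, i.e.\ $\lambda^\star(x) = c\,e^{\beta x}$ on an interval. If not, there is an interior point of the support near which $u^\star$ is strictly concave; perturbing $u^\star$ to $u^\star + \eps g$ with $g$ a small smooth bump supported there keeps the function concave for $|\eps|$ small, and among such perturbations (an infinite-dimensional family) one can select $g$ that preserves $\int \lambda = 1$, $\E X = 0$, $\E X^2 = 1$ (three linear constraints) and yet has $\frac{d}{d\eps}\E X^3 \neq 0$ — this can fail only if $x^3$ agreed with a polynomial of degree $\le 2$ on an interval, which is impossible — contradicting extremality. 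Since $e^{\beta x}$ is not integrable on all of $\R$, the support of $\lambda^\star$ must be a half-line or a bounded interval; applying an affine change of variable $x \mapsto \pm \beta x + \mathrm{const}$ and re-imposing $\E X = 0$, $\E X^2 = 1$ shows that $\lambda^\star$ is, up to reflection, either the shifted standard exponential (giving $\E X^3 = 2$), the uniform law on a symmetric interval (giving $0$), or a recentered–rescaled truncation $e^{-x}\mathbf 1_{[0,L]}$, for which a one-variable calculus check gives $\E X^3 < 2$ for finite $L$ with $\E X^3 \to 2$ as $L \to \infty$ (reflections account for the negative-skew cases, which are trivially $\le 2$). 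Hence $S = 2$, which is exactly $\E X^3 \le 2(\E X^2)^{3/2}$ after undoing the normalization, with equality precisely for affine images of the exponential distribution.

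The step I expect to be the main obstacle is the extremality argument: turning the perturbation heuristic into a rigorous proof, in particular handling boundary variations at a finite endpoint of the support and ruling out a non-smooth (e.g.\ genuinely piecewise-affine, multi-piece) extremizer. A clean way to package this is the standard principle that a linear functional maximized over log-concave densities subject to $k$ linear moment constraints is extremized by a density whose log is piecewise affine with at most $k+1$ pieces; here $k = 2$, reducing matters to a finite-dimensional optimization over such log-piecewise-affine densities, which once more collapses to the exponential. By comparison, the compactness/attainment step, the smoothing reduction, and the final explicit one-variable computation are all routine.
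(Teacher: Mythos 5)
Your strategy is in the same general spirit as the paper's (reduce to log-affine extremizers, then a one-variable calculus check, with the shifted exponential as the equality case), but the localization step has a genuine gap, and the paper closes it in a cleverer way. The paper invokes the Fradelizi--Gu\'edon characterization of extreme points (\cite{FG04}), which applies to the family of log-concave measures satisfying a \emph{single} integral constraint $\int g\,d\mu \geq 0$ and yields that extremizers of a convex functional are either Dirac masses or \emph{single-piece} log-affine densities $Z^{-1}\mathbf{1}_{[a,b]}e^{cx}$. To get down to one constraint, the paper takes $g(x)=1-x^2$ (so only $\E X^2\le 1$ is imposed) and, crucially, maximizes the modified functional $\Psi(\mu)=\int(x^3-3x)\,d\mu$ rather than $\int x^3\,d\mu$: the centering is folded into the functional instead of being an extra constraint, since $\Psi(\mu)=\E X^3$ whenever $\E X=0$. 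This is the trick your approach is missing. By keeping $\int\lambda=1$, $\E X=0$, $\E X^2=1$ as three separate constraints, you are forced into either (a) a perturbation argument whose regularity issues you acknowledge but do not resolve (the extremizer need not be $C^2$, so ``strictly concave near a point'' is not directly available; smoothing the domain as in Theorem~\ref{th:main} is not the same as smoothing the extremizer, which destroys extremality), or (b) the ``$k+1$ pieces'' principle, which is not a standard result in the form you state (it gives the wrong count even for $k=1$, where Fradelizi--Gu\'edon gives one piece, not two), and which you then assert ``collapses to the exponential'' without argument. That last assertion is precisely the content that would need to be proved: a three-piece log-affine density with $\E X=0$, $\E X^2=1$ is a several-parameter family, and ruling it out is not obviously easier than the original problem.

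Two smaller remarks. First, your explicit parametrization by recentered--rescaled $e^{-x}\mathbf 1_{[0,L]}$ is a correct description of the one-parameter family of centered, unit-variance, single-piece log-affine densities, and your computation $\E X^3=2$ for the shifted exponential is right, so the calculus endgame is sound if you reach it. Second, the compactness/attainment step is fine but, notably, the paper sidesteps it entirely: it truncates to $[-M,M]$ at the outset and passes to the limit at the end, which is cleaner than arguing tightness and attainment for a supremum that is only approached (your own analysis shows the sup is not attained: it is the $L\to\infty$ limit). If you adopt the paper's $\Psi=\int(x^3-3x)\,d\mu$ trick, the Dirac case $\delta_{-1}$ does achieve the value $2$, which is why the extreme-point dichotomy works cleanly there.
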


\begin{proof}
We may assume that $X$ is supported in a compact interval $[-M,M]$. Indeed, if that is not the case, we can define $X_k$ to have the law of $X$ conditioned on the interval $[-k,k]$; if the result for the lemma holds for every $X_k$, we may take limits and deduce its correctness for $X$.

Define $g(x) = 1-x^2$. Denote by $P_g$ the family of log-concave probability measures $\mu$ on $\mathbb{R}$, supported on $[-M,M]$ which satisfy $\int g d \mu \geq 0$. Let $\Psi:P_g \to \mathbb{R}$ be a convex functional. According to \cite[Theorem 1, Theorem 2]{FG04}, which characterizes the extremal points of the set $P_g$, the supremum $\sup_{\mu \in P_g} \Psi(\mu)$ is attained either on a Dirac measure $\delta_x$ for some $x \in \mathbb{R}$ or on a measure $\mu$, which satisfies the following: \\
(i) The density of $\mu$ is log-affine on its support, hence there exist $a,b \in [-M,M]$,  $a<b$ and $c \in \mathbb{R}$ such that
\begin{equation} \label{eq:loglinear}
\frac{d \mu}{dx} = Z^{-1} \mathbf{1}_{[a,b]} e^{cx} ,
\end{equation}
where $Z$ is chosen such that $\mu$ is a probability measure. \\
(ii) One has $\int g d \mu = 0$. \\
(iii) There is $\xi \in \{-1,1\}$ such that for all $x \in [a,b]$ one has $\xi \int_a^{x} g d \mu \geq 0$. \\ \\

Consider the linear functional $\Psi(\mu) = \int (x^3-3x) d \mu$. It is clear that the statement of the lemma will follow if we establish that
\begin{equation} \label{eq:ntsapdx1}
\sup_{\mu \in P_g} \Psi(\mu) \leq 2.
\end{equation}
(note that for centered measures $\mu$, $\Psi(\mu)$ is exactly $\int x^3 d \mu(x)$). Now, it is clear that if $\mu$ is a Dirac measure satisfying $\int g d \mu \geq 0$, we have that 
$$
\Psi(\mu) \leq \sup_{x \in [-1,1]} (x^3-3x) = 2.
$$
Therefore, according to the above, it is enough to verify that the bound of equation \eqref{eq:ntsapdx1} holds for measures of the form \eqref{eq:loglinear}, which satisfy the constraint
\begin{equation} \label{eq:rhom2}
\int x^2 d \mu(x) = 1 ,
\end{equation}
and such that either $-1 \leq a \leq 1$ or $-1 \leq b \leq 1$ (this is a direct consequence of (iii) above). 

In other words, the lemma will be established if we show that
\begin{equation} \label{eq:ntsapx2}
H(a,b,c) := Z(a,b,c)^{-1} \int_a^b (x^3-3x) e^{cx} dx \leq 2
\end{equation}
for all $a,b,c$ such that either $|a|\leq 1$ or $|b| \leq 1$ and such that
\begin{equation} \label{eq:constraint1}
G(a,b,c) := Z(a,b,c)^{-1} \int_a^b (x^2 -1) e^{cx} dx = 0. 
\end{equation}
We will continue the proof under the assumption that $|a| \leq 1$. The proof under the assumption $|b| \leq 1$ is similar.

A calculation gives
\begin{equation} \label{eq:defZ}
Z(a,b,c) = \int_a^b e^{cx} dx = \frac{e^{cb}-e^{ca}}{c} ,
\end{equation}
and by integrating by parts, we have that
\begin{align} \label{eq:eqH2}
H(a,b,c) ~&= \frac{1}{c Z(a,b,c)} (x^3-3x) e^{cx}|_a^b - \frac{1}{cZ(a,b,c)} \int 3(x^2-1) e^{cx} dx \\
&\stackrel{\eqref{eq:defZ}, \eqref{eq:constraint1} }{=} \frac{(b^3-3b) e^{cb} - (a^3-3a) e^{ca}}{e^{cb}-e^{ca}}. \nonumber
\end{align}
It will be useful to set $r=e^{c(b-a)}$ and, respectively, define $c(a,b,r) = \log(r)/(b-a)$. 

Next, note that for any $a \in [-1,1]$ and any $r > 0$, one has $G(a,1,c(a,1,r)) \leq 0$. Moreover, observe that $G(a,b,c(a,b,r))$ is continuously increasing with respect to $b$ when $b \geq 1$. Thus, by the intermediate value theorem, there exists a unique point $b>a$ such that $G(a,b,c(a,b,r)) = 0$. Let us try to obtain a formula for this point. A straightforward calculation yields
$$
G(a,b,c) = \frac{(b c (b c-2)+2-c^2) e^{b c}-(a c (a c-2)+2-c^2) e^{a c}}{c^2 \left(e^{b c}-e^{a c}\right)} ,
$$
which means that the constraint $G(a,b,c) = 0$ is equivalent to 
$$
r = \frac{a c (a c-2)+2-c^2}{b c (b c-2)+2-c^2} = \frac{a \tfrac{\log(r)}{b-a} \left (a  \tfrac{\log(r)}{b-a}-2 \right )+2 - \left ( \tfrac{\log(r)}{b-a} \right )^2}{b \tfrac{\log(r)}{b-a}\left (b \tfrac{\log(r)}{b-a}-2 \right )+2 - \left ( \tfrac{\log(r)}{b-a} \right )^2} .
$$
Or in other words
\begin{align*}
r  \left ( b \log(r)\left (b \log(r)-2 (b-a) \right )+2 (b-a)^2 - \log^2(r) \right ) & \\
= a \log(r)  \left (a  \log(r)-2 (b-a) \right )&+2 (b-a)^2 - \log^2(r).
\end{align*}

At this point, we see that $b$ can be expressed as a function of $(a,r)$ as a root of a quadratic equation. Out of the two possible roots of this equation, it is easily checked that exactly one is greater than $a$. We get the explicit formula
$$
b = b(a,r) := \frac{\log (r) \left(\text{sgn}(r-1) g(a,r)-a (r+1) \right)+2 a (r-1)}{q(a,r)} ,
$$
where
\small
\begin{align*}
g(a,r) =  \sqrt{-\left(a^2-2\right) (r-1)^2+r \left(a^2+r-1\right) \log ^2(r)-2 r (r-1) \log (r)} ,
\end{align*}
\normalsize
and
$$
q(a,r) = \left(2 r+r \log ^2(r)-2 r \log (r)-2\right).
$$

Using this formula, proving equation \eqref{eq:ntsapx2} under the constraint \eqref{eq:constraint1} amounts to showing that 
\begin{equation} \label{eq:finalH}
H(a,b(a,r),c(a,b(a,r),r)) \leq 2, ~~ \forall (a,r) \in [-1,1] \times (0, \infty).
\end{equation}
Plugging the definition of $b(a,r)$ and $c(a,r)$ into \eqref{eq:eqH2} gives
$$
H(a,r) := H(a,b(a,r),c(a,b(a,r),r)) = \frac{r \left ( b(a,r)^3-3b(a,r) \right )-(a^3-3a)}{r-1}.
$$
It turns out that $H(a,r)$ is monotone decreasing in both $a$ and $r$ in the domain $[-1,1] \times (0, \infty)$ (up to a removable discontinuity on $[-1,1] \times \{1\})$. Moreover, it is straightforward to check that $\lim_{r \to 0^+} H(-1,r) = 2$. These two facts complete the proof of inequality \eqref{eq:finalH}. We omit further details of this proof.
\end{proof}

\setcounter{lemma}{3}
\begin{lemma}
Let $\eps > 0$, and $X$ a real log-concave random variable with density $\lambda$. Let $x_1 < x_0 < x_2$ be three points satisfying $\lambda(x_1) < \eps \lambda(x_0)$ and $\lambda(x_2) < \eps \lambda(x_0)$. Then, with $c(\epsilon)= \left(1+ \frac{2}{\log(1/\epsilon)} \right)^3 \left(1+ \frac{2}{\log(1/\epsilon)} + \frac{2}{\log^2(1/\epsilon)}\right)$, one has
\begin{eqnarray*}
\left (1 - 2 c(\epsilon) \epsilon \log^2(1/\epsilon) \right ) \mathrm{Var}(X) & \leq & \int_{x_1}^{x_2} (x-x_0)^2 \lambda(x) dx  \\
& \leq & \E( |X-x_0|^2 \ | \ X \in [x_1, x_2]) .
\end{eqnarray*}
\end{lemma}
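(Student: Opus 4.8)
The upper inequality is immediate: since $\P(X\in[x_1,x_2])=\int_{x_1}^{x_2}\lambda(x)\,dx\le 1$, we have
\[
\E\big(|X-x_0|^2 \mid X\in[x_1,x_2]\big)=\frac{\int_{x_1}^{x_2}(x-x_0)^2\lambda(x)\,dx}{\P(X\in[x_1,x_2])}\ \ge\ \int_{x_1}^{x_2}(x-x_0)^2\lambda(x)\,dx .
\]
For the lower inequality, write $\alpha:=\log(1/\epsilon)$. We may assume $2c(\epsilon)\epsilon\alpha^2<1$, since otherwise the left-hand side is $\le 0$. Using $\int_{\R}(x-x_0)^2\lambda(x)\,dx=\mathrm{Var}(X)+(\E X-x_0)^2\ge\mathrm{Var}(X)$ and splitting this integral at $x_1$ and $x_2$, it suffices to prove the ``tail against adjacent slab'' bounds $\int_{x_2}^{\infty}(x-x_0)^2\lambda\le c(\epsilon)\epsilon\alpha^2\int_{x_0}^{x_2}(x-x_0)^2\lambda$ and its mirror image $\int_{-\infty}^{x_1}(x-x_0)^2\lambda\le c(\epsilon)\epsilon\alpha^2\int_{x_1}^{x_0}(x-x_0)^2\lambda$; adding them gives $\mathrm{Var}(X)\le(1+c(\epsilon)\epsilon\alpha^2)\int_{x_1}^{x_2}(x-x_0)^2\lambda$, hence $\int_{x_1}^{x_2}(x-x_0)^2\lambda\ge(1-c(\epsilon)\epsilon\alpha^2)\mathrm{Var}(X)\ge(1-2c(\epsilon)\epsilon\alpha^2)\mathrm{Var}(X)$.

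The core estimate is the right-tail bound (the left tail is identical after reflection, with $x_2$ replaced by $x_1$). If $\lambda(x_2)=0$ the left side vanishes, so assume $\lambda(x_2)>0$ (note $\lambda(x_0)>0$ automatically). Set $L:=x_2-x_0$ and $\beta:=\log\big(\lambda(x_0)/\lambda(x_2)\big)>\alpha$. The key trick is to use a single affine function, the chord of $\log\lambda$ through $(x_0,\log\lambda(x_0))$ and $(x_2,\log\lambda(x_2))$: by concavity of $\log\lambda$ on the support interval of $\lambda$, this chord lies below $\log\lambda$ on $[x_0,x_2]$ and its extension lies above $\log\lambda$ to the right of $x_2$, i.e.
\[
\lambda(x_0+t)\ge\lambda(x_0)\,e^{-\beta t/L}\ \ (0\le t\le L),\qquad \lambda(x_2+s)\le\lambda(x_2)\,e^{-\beta s/L}\ \ (s\ge 0).
\]
Substituting $t=Lu$, $s=Lv$ and using $\lambda(x_2)/\lambda(x_0)=e^{-\beta}$, the $\lambda(x_0)$, $\lambda(x_2)$ and $L^3$ factors cancel and
\[
\frac{\int_{x_2}^{\infty}(x-x_0)^2\lambda(x)\,dx}{\int_{x_0}^{x_2}(x-x_0)^2\lambda(x)\,dx}\ \le\ e^{-\beta}\,\frac{\int_0^{\infty}(1+v)^2 e^{-\beta v}\,dv}{\int_0^1 u^2 e^{-\beta u}\,du}\ =\ \frac{A_\beta}{2-A_\beta},\qquad A_\beta:=e^{-\beta}(\beta^2+2\beta+2),
\]
using $\int_0^{\infty}(1+v)^2 e^{-\beta v}\,dv=\beta^{-3}(\beta^2+2\beta+2)$ and $\int_0^1 u^2 e^{-\beta u}\,du=\beta^{-3}\gamma(3,\beta)=\beta^{-3}\big(2-e^{-\beta}(\beta^2+2\beta+2)\big)$.

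Finally, $\beta\mapsto A_\beta$ has derivative $-\beta^2 e^{-\beta}<0$, so $A_\beta\le A_\alpha=\epsilon(\alpha^2+2\alpha+2)=\epsilon\alpha^2(1+\tfrac2\alpha+\tfrac2{\alpha^2})=(1+\tfrac2\alpha)^{-3}c(\epsilon)\epsilon\alpha^2$; in the non-vacuous regime this is $<(1+\tfrac2\alpha)^{-3}\cdot\tfrac12<\tfrac12$, so $2-A_\beta>1$ and $\tfrac{A_\beta}{2-A_\beta}\le A_\beta\le A_\alpha\le c(\epsilon)\epsilon\alpha^2$, which is exactly the asserted right-tail bound. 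The main obstacle, and the point requiring care, is precisely this choice of comparison: attempting to bound the tail directly against $\mathrm{Var}(X)$ via a generic log-concave tail inequality loses the factor $\epsilon$ and degenerates when $x_2$ lies far beyond the region where $\lambda\approx\epsilon\lambda(x_0)$, whereas comparing against the mass of $[x_0,x_2]$ through one common affine majorant of $\log\lambda$ makes everything cancel and reduces the estimate to the elementary scalar quantity $A_\beta/(2-A_\beta)$. The remaining steps (the mirror bound for the left tail, and verifying that $\lambda(x_1)=0$ makes the left tail vanish) are routine.
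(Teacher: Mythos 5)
Your proof is correct, and it takes a genuinely different route from the paper's. The paper normalizes $x_0=0$ and $\E X^2 = 1$, then derives an \emph{absolute} pointwise bound $\lambda(x)\le \epsilon\exp\bigl(-\tfrac{x-x_2}{x_2}\log(1/\epsilon)\bigr)$ for $x>x_2$ and integrates; to convert this into the numerical estimate it invokes two external facts from Lov\'asz--Vempala (that $\lambda(0)\le 1$ for an isotropic log-concave density, and that one may assume $x_2 \le \log(1/\epsilon)+2$). Your argument instead compares the tail $\int_{x_2}^\infty(x-x_0)^2\lambda$ \emph{relatively} against the adjacent slab $\int_{x_0}^{x_2}(x-x_0)^2\lambda$, using the single chord of $\log\lambda$ through $(x_0,\log\lambda(x_0))$ and $(x_2,\log\lambda(x_2))$ as a two-sided log-affine comparison (below $\log\lambda$ on $[x_0,x_2]$, above it beyond $x_2$). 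This makes the scale factors $\lambda(x_0)$, $\lambda(x_2)$, $L^3$ cancel, reducing everything to the monotone scalar quantity $A_\beta=e^{-\beta}(\beta^2+2\beta+2)$ and eliminating the need for any normalization or external lemmas. The bookkeeping on $A_\alpha$ versus $c(\epsilon)\epsilon\log^2(1/\epsilon)$ checks out (since $A_\alpha=(1+2/\alpha)^{-3}c(\epsilon)\epsilon\alpha^2$, with $\alpha=\log(1/\epsilon)$), the monotonicity $A_\beta'=-\beta^2 e^{-\beta}<0$ is right, and the edge cases ($\lambda(x_0)>0$ forced by the hypothesis, $\lambda(x_2)=0$ trivial, reflection for the left tail) are handled. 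The final assembly $\int_{x_1}^{x_2}\ge(1+c\epsilon\alpha^2)^{-1}\mathrm{Var}(X)\ge(1-2c\epsilon\alpha^2)\mathrm{Var}(X)$ is a touch tighter than required. Overall this is a cleaner, more self-contained proof that is also manifestly invariant under affine reparametrization of the line, which the paper's proof obscures by rescaling first.
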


\begin{proof}
We only have to prove the first inequality, as the second one is obviously true. By rescaling and translating, we can assume without loss of generality that $x_0 = 0$ and $\E(X^2) = 1$. Under these conditions we will prove the slightly stronger following bound:
\begin{equation} \label{eq:tail1}
\int_{x_1}^{x_2} x^2 \lambda(x) dx \geq 1- 2 \left(1+ \frac{2}{\log(1/\epsilon)} \right)^3 \left(1+ \frac{2}{\log(1/\epsilon)} + \frac{2}{\log^2(1/\epsilon)}\right) \epsilon \log^2(1/\epsilon) .
\end{equation}
We prove that in fact
\begin{equation} \label{eq:tail2}
\int_{x_2}^{+\infty} x^2 \lambda(x) dx \leq \left(1+ \frac{2}{\log(1/\epsilon)} \right)^3 \left(1+ \frac{2}{\log(1/\epsilon)} + \frac{2}{\log^2(1/\epsilon)}\right) \epsilon \log^2(1/\epsilon) ,
\end{equation}
and an identical computation yields the same upper bound for the integral on $(-\infty,x_1]$, which then concludes the proof of \eqref{eq:tail1}.

First note that, by log-concavity of $\lambda$, one has for any $x > x_2$,
$$\frac{\lambda(x)}{\lambda(x_2)} \leq \left( \frac{\lambda(x_2)}{\lambda(x_0)}\right)^{\frac{x-x_2}{x_2}} \leq \epsilon^{\frac{x-x_2}{x_2}} .$$
Using [Lemma 5.5 (a), \cite{LV07}] one has $\lambda(0) \leq 1$, and thus $\lambda(x_2) \leq \epsilon$, which together with the above display yields
$$\lambda(x) \leq \epsilon \exp\left( - \frac{x-x_2}{x_2} \log(1/\epsilon) \right) .$$
This directly implies that
$$\int_{x_2}^{+\infty} x^2 \lambda(x) dx \leq \left (\frac{x_2}{\log(1/\epsilon)} \right )^3 \left(1+ \frac{2}{\log(1/\epsilon)} + \frac{2}{\log^2(1/\epsilon)}\right) \epsilon \log(1/\epsilon)^2 .$$
Finally, using [Lemma 5.7, \cite{LV07}] it is easy to see that without loss of generality one can assume that $x_2 \leq \log(1/\epsilon) + 2$, and thus the above display directly yields \eqref{eq:tail2}.
\end{proof}

\setcounter{lemma}{4}
\begin{lemma}
Let $\phi \in C^2((a,b))$, and $\zeta(x) = \log \phi(x)$. Then 
$$
\phi \mbox{ is } n\mbox{-concave in } (a,b) \Leftrightarrow \zeta'' \leq - \tfrac 1 n (\zeta')^2 \mbox{ in } (a,b).
$$
\end{lemma}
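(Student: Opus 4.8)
The plan is to unwind the definition of $n$-concavity and reduce everything to the elementary second-derivative characterization of concavity of a $C^2$ function. Recall that $\phi$ being $n$-concave on $(a,b)$ means by definition that $\psi := \phi^{1/n}$ is concave on $(a,b)$. Since the hypothesis implicitly forces $\phi = e^{\zeta} > 0$ on $(a,b)$ (otherwise $\zeta$ is not defined) and $\phi \in C^2((a,b))$, the function $\psi = e^{\zeta/n}$ is also $C^2$ on $(a,b)$, and hence $\psi$ is concave on $(a,b)$ if and only if $\psi'' \le 0$ everywhere on $(a,b)$.

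The core of the argument is then a one-line computation of $\psi''$ in terms of $\zeta$. Differentiating $\psi = e^{\zeta/n}$ gives $\psi' = \tfrac{1}{n}\zeta' e^{\zeta/n}$, and differentiating again,
$$
\psi'' = \left( \frac{\zeta''}{n} + \frac{(\zeta')^2}{n^2} \right) e^{\zeta/n} = \frac{e^{\zeta/n}}{n}\left( \zeta'' + \frac{1}{n}(\zeta')^2 \right).
$$
Because $e^{\zeta/n}/n > 0$, the sign of $\psi''$ at each point equals the sign of $\zeta'' + \tfrac1n(\zeta')^2$ there. Therefore $\psi'' \le 0$ on $(a,b)$ if and only if $\zeta'' \le -\tfrac1n(\zeta')^2$ on $(a,b)$, which is exactly the claimed equivalence.

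There is essentially no serious obstacle here; the only points that require a moment's care are (i) checking that $\phi^{1/n}$ inherits $C^2$ regularity from $\phi$ so that the pointwise second-derivative test for concavity applies (immediate since $\phi > 0$ and $t \mapsto t^{1/n}$ is smooth on $(0,\infty)$), and (ii) making sure the convention "$n$-concave" $=$ "$\phi^{1/n}$ concave" is the one in force, as used earlier via the Brunn--Minkowski / Borell fact that the $(n{-}1)$-dimensional marginal $\lambda$ of a uniform measure on a convex body in $\R^n$ is $n$-concave. Both are routine, so the computation above constitutes the full proof.
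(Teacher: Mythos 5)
Your proof is correct and takes essentially the same approach as the paper: both reduce to the second-derivative test for concavity of $\psi = \phi^{1/n}$ (using $\psi > 0$ and $\psi \in C^2$) and express $\psi''$ in terms of $\zeta$. The only cosmetic difference is that you differentiate $\psi = e^{\zeta/n}$ directly, while the paper computes $(\log\psi)''$ and rearranges; the underlying one-line calculation is identical.
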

\begin{proof}
Denote $\psi(x) = \phi^{1/n}(x)$ and $\xi(x) = \log \psi(x) = \tfrac 1 n \zeta(x)$. Then
$$
\xi''(x) = \frac{\psi''(x)}{\psi(x)} - \frac{\psi'(x)^2 }{\psi(x)^2} = \frac{(\phi^{1/n}(x))''}{\psi(x)} - \xi'(x)^2.
$$ 
So $\phi^{1/n}$ is concave if and only if $\xi'' \leq - (\xi')^2$ which proves the fact.
\end{proof}

\section*{Acknowledgements}
The authors would like to thank Jacob Abernethy and Elad Hazan and Bo'az Klartag for interesting discussions. We are grateful to Fran\c{c}ois Glineur for pointing out the connection with the universal barrier in the case of homogeneous convex cones.

\bibliographystyle{plainnat}
\bibliography{newbib}
\end{document}